\DeclareMathOperator{\Aut}{Aut}
\DeclareMathOperator{\diag}{diag}
\DeclareMathOperator{\ddim}{ddim}
\DeclareMathOperator{\dind}{dind}
\DeclareMathOperator{\LL}{span}
\def\RR{\mathbb{R}}
\def\ZZ{\mathcal{Z}}
\def\t{\mathsmaller{T}}
\def\hn{\mathfrak{h}_{2n+1}}
\def\Hn{H_{2n+1}}
\def\DD{\mathcal{D}}
\newtheorem{lemma}{{Lemma}}[section]
\newtheorem{theorem}{{Theorem}}[section]
\newtheorem{proposition}{{Proposition}}[section]
\theoremstyle{definition}
\newenvironment{example}
  {\pushQED{\qed}\examplex}
  {\popQED\endexamplex}
\newtheorem*{thank}{{\indent Acknowledgement}}
\title{Integrability of the sub-Riemannian geodesic flow of the left-invariant metric on the Heisenberg group}
\author{Milan Pavlovi\'{c}\orcidlink{0009-0000-6079-9115}, Tijana \v{S}ukilovi\'{c}\orcidlink{0000-0001-6371-3081}}
\affil{\small{University of Belgrade, Faculty of Mathematics, Belgrade, Serbia}}
\date{\today}
\begin{document}
\allowdisplaybreaks

\maketitle\unmarkedfntext{2020 \emph{Mathematics Subject Classification}. 53C17, 22E25, 37J35\\ 
\indent\phantom{K}\emph{Key words and phrases}: Heisenberg group, left-invariant metrics, sub-Riemannian metric, right-invariant distributions, {LR} systems, geodesic flow, completely integrable Hamiltonian system.}

\begin{abstract}
In this study, we investigate two distinct classes of normal geodesic flows associated with the left-invariant sub-Riemannian metric on the $(2n+1)$-dimensional Heisenberg group. The first class arises from the left-invariant distribution, whereas the second is derived from the right-invariant distribution. It is established that the Hamiltonian system corresponding to the left-left (LL) configuration is completely integrable in the non-commutative sense. We demonstrate that the left-right (LR) systems exhibit non-commutative integrability in dimensions exceeding $5$, while in dimensions $3$ and $5$, integrability is achieved in the commutative sense.
\end{abstract}

\section*{Introduction}

Sub-Riemannian geometry is a generalization of Riemannian geometry, sometimes described as Riemannian geometry with restrictions that emphasize its anisotropy. Among sub-Riemannian manifolds, nilpotent groups form a very important class. They are tangent cones of sub-Riemannian manifolds, which are the closest analogue of tangent spaces in Riemannian geometry, and the Heisenberg group is the most important and remarkable example of sub-Riemannian geometry that is not Riemannian. Although it is a relatively new area of geometry, sub-Riemannian geometry is a fruitful field of research, thanks to its wide range of applications, including robotics, optimal control theory, mechanics, hypoelliptic operator theory, geometric group theory, polynomial growth groups, and differential geometry.
Although it shares some properties with Riemannian manifolds, sub-Riemannian geometry has its own peculiarities, e.g. not all geodesics are solutions of Hamilton's system, and its Hausdorff dimension is always larger than its topological dimension -- a property that reflects the anisotropy in sub-Riemannian structures.

In this paper, we study the integrability of the sub-Riemannian geodesic flow induced by the left-invariant Riemannian metric on the Heisenberg group. We are interested in two different classes of this problem: the first is associated with the left-invariant distribution, and the second one with the right-invariant distribution. Examples of integrable sub-Riemannian geodesic flows on Lie groups and homogeneous spaces can be found, for example, in ~\cite{BZ, BB, Sac, Ju, Mont2002, BJ3, JSV}. When investigating non-holonomic systems on Lie groups, an important class of problems arises that are modeled with so-called LR systems -- structures with right-invariant distribution and left-invariant metric. Such systems are studied, for example, in~\cite{VV1998,FedJov, VIA1978}.  For instance, the motion of a rigid body around a fixed point under a non-holonomic constraint (the projection of the angular velocity onto the fixed vector in space is constant) is described by an integrable LR system. Another important example of such systems is the Chaplygin's problem of a ball rolling on a rough plane.

Motivated by such views and by the paper~\cite{Taim1997}, in addition to the joint left-invariant distribution, we study sub-Riemannian geodesic flows of the left-invariant metric on the $(2n+1)$-dimensional Heisenberg group with right-invariant distribution and prove their integrability. The restriction of this system corresponds to the motion of a charged particle in a constant magnetic field. As far as we know, this is the first result beyond dimension three (see e.g.~\cite{Taim1997, Maz, AB}) for the integrability of geodesic flows for sub-Riemannian manifolds with left-invariant metric and right-invariant distribution.

The paper is structured as follows. In Section~\ref{sec:prelim}, we recall some basic concepts of sub-Riemannian geometry. Section~\ref{sec:subriem} introduces the $(2n+1)$-dimensional Heisenberg group and its sub-Riemannian structures. The normal geodesic flows corresponding to the left-invariant sub-Riemannian metric are analyzed in Section~\ref{sec:geodfl} as Hamiltonian systems which are integrable in the sense of Liouville. 
For the metric corresponding to the left-invariant distribution, the system is integrable in the non-commutative sense. The restriction of this system to $\RR^{2n}$ is the system equivalent to the Hamiltonian describing the motion of a charged particle in a constant magnetic field, and whose phase space is foliated into closed trajectories. These statements are summarized in Theorem~\ref{thm:left} and are a consequence of the more general results previously considered in~\cite{Jov,MM2017,Pod2023,BiggsNagy2016}.
Finally, we present the main result of the paper. We consider the canonical left-invariant metric corresponding to the right-invariant distribution. The LR Hamiltonian system is integrable in the commutative sense if $n=1$ (the case considered by Taimanov~\cite{Taim1997}) or $n=2$ (see Example~\ref{ex:5}). If $n> 2$, we have integrability in the non-commutative sense (see Theorem~\ref{thm:right}~\ref{it:a}). Similar to the LL case, we also consider the restricted system and conclude that it is completely integrable system corresponding to the motion of a charged particle in a constant magnetic field (see Theorem~\ref{thm:right}~\ref{it:b}). This extends to the geodesic flows associated with the one-parameter family of Riemannian metrics (see Proposition~\ref{prop:int}).

\section{Preliminaries}\label{sec:prelim}

Here we mention some well-known facts of sub-Riemannian geometry. Most of them are widely known in the literature and can be found, for example, in~\cite{Mont2002,Str1986}.

A sub-Riemannian structure on a manifold $Q$ of dimension $n$ is a pair $(\mathcal{D}, \langle \cdot,\cdot \rangle_{sR})$, where $\mathcal{D}$ is a distribution of constant rank $m$ of the tangent bundle $TQ$, and $\langle \cdot,\cdot \rangle_{sR}(q)$ is smoothly varying on $q\in Q$, a positive definite  bilinear form acting as an inner product on this distribution.
The distributions of interest are those that satisfy the bracket generating property. The bracket generating property means that at each point $q\in Q$ vector fields of $\mathcal{D}_q$ together with all their Lie brackets span all of $T_qQ$.

The almost everywhere absolutely continuous curve $\gamma: [0,1]\rightarrow Q$ is admissible, or horizontal, if $\dot{\gamma}(t)\in \mathcal{D}_{\gamma(t)}$ for almost every $t\in[0,1]$. For admissible $\gamma$ we define its length as
\begin{align*}
 \ell(\gamma)=\int_0^1 \|\dot{\gamma}(t)\|dt,
\end{align*}
where $\|\dot{\gamma}\|=\sqrt{\langle \dot{\gamma}, \dot{\gamma}\rangle_{sR}}$. In this case, the sub-Riemannian or Carnot-Carath\'{e}odory distance $dist_{CC}(x,y)=\inf \ \ell (\gamma)$ is defined, and the infimum is taken over all admissible curves $\gamma$ connecting $x$ and $y$. If there is no such curve, the Carnot-Carath\'{e}odory distance is infinite. For distributions with bracket-generating property, the theorems of Chow~\cite{Chow1939} and Rashievskii~\cite{Ras1938} guarantee that there always exists an admissible curve connecting any two points. The result for contact distributions is even older and goes back to the work of Carath\'{e}odory~\cite{Car1909} on Carnot cycles, which explains the name of the distance.

Cometric is a section of a bundle $S^2(TQ)\subset TQ\otimes TQ$ of symmetric bilinear forms on a cotangent bundle of a manifold $Q$. This contravariant two-tensor has the rank of a distribution which is generally smaller than the dimension of the manifold. To obtain a cometric, we introduce a linear map $\beta: T^{*}Q\rightarrow TQ$ that satisfies the following two conditions:
\begin{enumerate*}[label=(\roman*)]
 \item\label{it:1} ${\rm Im}( \beta_q)=\mathcal{D}_q$,
 \item\label{it:2} $\lambda(v)=\langle \beta_q(\lambda),v \rangle_{sR}$ for $v\in\mathcal{D}_q$ and $\lambda\in T^*Q$.
\end{enumerate*}
The mapping $\beta$ induces the bilinear form $(\cdot,\cdot)_q:T_q^*Q\otimes T_q^*Q\rightarrow \mathbb{R}$. More precisely, for $q\in Q$ and $p,\lambda \in T^{*}Q$ we have $(\lambda,p)_q=\lambda(\beta_q(p))$. The mapping $\beta_q$ is symmetric, i.e. it coincides with its dual mapping $\beta_q^*$, and cometric is symmetric and non-negative definite.

Every cometric of constant rank whose corresponding linear map $\beta$ satisfies the conditions~\ref{it:1} and~\ref{it:2} defines a sub-Riemannian structure, and conversely, for every sub-Riemannian structure, there is a unique $\beta$ that satisfies these conditions.

For the cometric $(\cdot,\cdot):T^*Q\times T^*Q\rightarrow \mathbb{R}$, the sub-Riemannian Hamiltonian or the kinetic energy is defined on $T^*Q$ as $H(q,\lambda)=(\lambda,\lambda)_q$.
Since the image of ${ \beta}$ is $\mathcal{D}$, for admissible curve $\gamma(t)$ there is a covector $\lambda(t)\in T^*_{\gamma(t)}Q$ such that $\dot{\gamma}={ \beta}_{\gamma(t)}(\lambda(t))$. We call $(\gamma(t),\lambda(t))$ cotangent lift and we have:
\begin{align*}
 H(q,\lambda)=\frac{1}{2}(\lambda,\lambda)_q=\frac{1}{2}\lambda({ \beta}_q(\lambda))=\frac{1}{2}\langle { \beta}_q(\lambda), { \beta}_q (\lambda)\rangle_{sR}=\frac{1}{2}\|\dot{\gamma}\|^2 .
\end{align*}
The preceding equations justify the notion of kinetic energy for the Hamiltonian and further justify the use of the Cauchy-Schwarz inequality for the length function for $\gamma : [0,1]\rightarrow Q$ as
\begin{align*}
 \ell (\gamma)\leq \sqrt{ \int \| \dot{\gamma}\|^2dt}= \sqrt{2 E(\gamma)},
\end{align*}
which shows that minimizing the curve length is equal to minimizing the energy. Equality is achieved for $\gamma$, parameterized by the length. 

For the pairing $\lambda(X(q))$, $q\in Q$ between the fixed vector field $X$ and the covector $\lambda$ we write $P_X(q,\lambda)$. The function $P_X$ is referred to as momentum function.
For vector fields spanning distribution, written in coordinates as $X_a=\sum_{j} X_a^j(q)\frac{\partial}{\partial x_j}$, the momentum functions are given by
{$P_{X_a}=\sum_{j} X_a^j(q)\lambda_j$}. Let $g_{ab}(q)$ be the matrix of inner products of vectors $X_a$ and $g^{ab}(q)$ its inverse matrix. Then $g^{ab}(q)$ is an $m\times m$ matrix-valued function (where $m$ is the distribution rank) defined in some open set of $Q$.
It follows that Hamiltonian function can be written as
\begin{align*}
 H(q,\lambda)=\frac{1}{2}\sum_{a,b} g^{ab}(q)P_{X_a}(q,\lambda)P_{X_b}(q,\lambda),
\end{align*}
and the corresponding Hamilton's equations on the cotangent bundle are
\begin{align}\label{eq:HamEq}
 \dot{x}_i=\frac{\partial H}{\partial \lambda_i},\qquad\dot{\lambda}_i=-\frac{\partial H}{\partial x_i}.
\end{align}
Projections of solutions of Hamiltonian system are locally length-minimizing curves called normal geodesics. The following important theorem can be proved using the calculus of variations (see~\cite{Taim1997,Str1986}) or using Hamilton-Jacobi theory~\cite{Mont2002}. 
\begin{theorem}[On a Hamiltonian structure for normal geodesic flow]\label{thm:HS}
  The projections of trajectories of the Hamiltonian flow on {$T^* Q$} with the Hamiltonian function $H(x,\lambda)=\frac{1}{2}g^{ij}\lambda_i\lambda_j$ are exactly the naturally-parametrized normal geodesics of the Carnot-Carath\'{e}dodory metric $g^{ij}$. 
\end{theorem}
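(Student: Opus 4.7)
The plan is to prove the theorem by recasting the search for length-minimizing admissible curves as a constrained variational / optimal-control problem, and then Legendre-transforming to arrive at the canonical Hamiltonian formulation. Because admissible variations must preserve the horizontality constraint $\dot\gamma\in\mathcal{D}$, direct Euler--Lagrange differentiation does not apply; instead I would invoke Pontryagin's Maximum Principle (equivalently, Lagrange multipliers enforcing the constraint pointwise in $t$), working in a local frame $X_1,\ldots,X_m$ of $\mathcal{D}$ with metric coefficients $g_{ab}=\langle X_a,X_b\rangle_{sR}$.

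First, I would parametrize any admissible curve through controls $u^a(t)$ via $\dot\gamma(t)=\sum_a u^a(t)\,X_a(\gamma(t))$, so that the energy to be minimized becomes $\tfrac12\int g_{ab}(\gamma)u^au^b\,dt$. Introducing covector multipliers $\lambda_i(t)$ for the $n$ pointwise constraints $\dot x^i=\sum_a u^aX_a^i(x)$ gives the augmented Lagrangian
$$
\mathcal{L}=\tfrac12\, g_{ab}(x)u^au^b-\lambda_i\!\left(\dot x^i-\sum_a u^aX_a^i(x)\right).
$$
Stationarity in the controls $u^a$ yields $g_{ab}(x)u^b=\lambda_iX_a^i(x)=P_{X_a}(x,\lambda)$, so that $u^a=g^{ab}P_{X_b}$ is fully determined by $\lambda$; substituting back, the extremal Hamiltonian becomes $H(x,\lambda)=\tfrac12\, g^{ab}(x)P_{X_a}P_{X_b}$, and this agrees with $\tfrac12\, g^{ij}\lambda_i\lambda_j$ via the identity $g^{ij}=\sum_{a,b}g^{ab}X_a^iX_b^j$, which is a direct consequence of the defining conditions~\ref{it:1} and~\ref{it:2} of the map $\beta$.

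Second, the remaining stationarity conditions (in $\lambda_i$ and $x^i$) are precisely Hamilton's equations~\eqref{eq:HamEq}, and horizontality $\dot\gamma=\beta(\lambda)\in\mathcal{D}$ is reproduced automatically through $\dot x^i=\partial H/\partial\lambda_i=g^{ij}\lambda_j$. Energy conservation $2H=g_{ab}u^au^b=\|\dot\gamma\|^2\equiv\text{const}$ then forces the natural parametrization. The delicate point -- and the one I expect to be the main obstacle -- is that this Lagrange-multiplier argument tacitly normalizes the cost coefficient in PMP to one, which is exactly the definition of a \emph{normal} extremal; abnormal minimizers fall outside the image of this construction. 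Thus the identification is made precisely with normal geodesics, as stated. To close the converse direction -- that every Hamiltonian trajectory projects to a curve that is actually locally length-minimizing -- I would appeal to the standard second-variation argument or the Hamilton--Jacobi / calibrated-family construction as in~\cite{Mont2002,Str1986}.
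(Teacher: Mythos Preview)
The paper does not supply its own proof of this theorem: immediately before the statement it simply remarks that the result ``can be proved using the calculus of variations (see~\cite{Taim1997,Str1986}) or using Hamilton--Jacobi theory~\cite{Mont2002}'' and moves on. Your proposal is a correct outline of the first of these routes --- the PMP/Lagrange-multiplier derivation that identifies normal extremals with the projections of Hamiltonian trajectories --- and your closing appeal to~\cite{Mont2002,Str1986} for local minimality matches what the paper itself defers to. So there is nothing to compare against beyond the cited literature, and your sketch is in line with it.
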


 In contrast to Riemannian geometry, in sub-Riemannian geometry we have locally minimizing curves that are not solutions of Hamilton's system. We call this type of geodesic an abnormal geodesic. In this paper, they are not of interest because for contact distributions, all geodesics are solutions of the Hamiltonian system~{\cite[Proposition 4.38]{ABB}.}

The Poisson bracket of two smooth functions is defined by using the Poisson tensor $\Lambda$ as usual $\{f, g\}\vert_x = {\Lambda_x}(df(x), dg(x))$, resulting in the Lie algebra structure of $C^\infty(S)$, $S=T^*Q$. In canonical coordinates $(q,p)$ the Poisson bracket is given by:
\begin{align*}
 \{f,g\}=\sum_i \frac{\partial f}{\partial q_i}\frac{\partial g}{\partial p_i}- \frac{\partial f}{\partial p_i}\frac{\partial g}{\partial q_i}.
\end{align*}

Let $f$ and $g$ be the first integrals of the Hamiltonian equations~\eqref{eq:HamEq} with the Hamiltonian $H$, i.e. $\{f,H\}=\{g,H\}=0$. Because of the Jacobi identity, $\{f,g\}$ is also the first integral. We can therefore consider the Lie algebra $\mathcal{F}$ of the first integrals.
Let $F_x = \{df \vert\, f \in \mathcal{F}\}$ be a subspace of $T^*_xS$, spanned by differentials of the functions from $\mathcal{F}$ at $x \in S$. It is assumed that the dimensions of $F_x$ and $\dim \ker\Lambda_x\vert F_x$ are constant on an open dense set $U$ of $S$. The corresponding dimensions are denoted by $\ddim \mathcal{F}$ (differential dimension of $\mathcal{F}$) and $\dind\mathcal{F}$
(differential index of $\mathcal{F}$).

$\mathcal{F}$ is a complete algebra if
\begin{align}\label{eq:complA}
 \ddim F + \dind F = \dim S.
\end{align}
The Hamiltonian system~\eqref{eq:HamEq} is called \emph{completely integrable in the non-commutative sense} (or \emph{superintegrable}) if it has a complete algebra $\mathcal{F}$ of first integrals. 
Mishchenko and Fomenko stated the conjecture that non-commutative integrability implies the Liouville integrability by means of an algebra of integrals that belong to the same functional class as the original one~\cite{MishFom}.
If $\mathcal{F}$ is a complete commutative algebra,
 $\ddim \mathcal{F}=\dind \mathcal{F}=\frac{1}{2}\dim S$,
we have the usual Liouville integrability.

\section{Sub-Riemannian structures on $\Hn$}\label{sec:subriem}

The Heisenberg group is a two-step nilpotent Lie group $\Hn$  defined on the base manifold $\RR ^{2n}\oplus \RR$ by multiplication
\begin{align}\label{eq:mult}
(u, \zeta )\cdot (v, \chi):= (u + v ,\, \zeta + \chi + \omega (u,v)).
\end{align}
Here $\omega$ denotes the standard symplectic form in the vector space $\RR ^{2n}$, which can be written in the following form
$\omega (u,v) = u^\t Jv$, $u, v \in \RR ^{2n}$, where
\begin{align}
	\label{eq:J}
J = J_{2n} =
\begin{pmatrix}
	0 & -E\\
	E & 0
\end{pmatrix}
\end{align}
and $E$ is the identity matrix of dimension $n\times n$.
This group can also be viewed as the group of matrices of the form:
\begin{align*}
  \begin{pmatrix}
    1 & x^\t & z\\
    0 & 1 & y\\
    0 & 0 & 1
  \end{pmatrix},\quad x,y\in \RR ^{n}, z\in\RR,
\end{align*}
with the standard matrix multiplication. This representation of the Heisenberg group is more convenient and will be used in the following.

The corresponding Lie algebra
$\hn = \RR ^{2n}\oplus \RR = \RR ^{2n}\oplus \ZZ = \{ (a, \alpha) \, | \, a\in \RR^{2n}, \alpha \in \RR\}$
is given by the following commutator equation
\begin{align}
	\label{eq:hnkom}
	[(a, \alpha ),(b, \beta)] = (0, \omega (a,b)).
\end{align}

Note that $\ZZ = \RR \langle\xi\rangle$, $\xi=(0,1)$, is one-dimensional center and one-dimensional commutator subalgebra of $\hn$. 
If  $e_1, \dots , e_n, f_1, \dots , f_n$ represents the standard basis of $\RR ^{2n}$, then nonzero commutators~\eqref{eq:hnkom} of $\hn$ are $[e_i, f_i] = \xi$, $i = 1, \dots , n$.

\begin{lemma}[\!\!\cite{Vuk2015}]
	Group of automorphisms of Heisenberg algebra $\hn$ denoted by $\Aut (\hn )$ can be described as  semi-direct product of symplectic group $Sp(2n, \RR),$ subgroup of translations isomorphic to $\RR ^{2n}$ and $1$-dimensional ideal, i.e.,
    \begin{align*}
        \Aut (\hn ) = \left\{\left.\begin{pmatrix}
            F & 0\\
            v^T & \alpha
        \end{pmatrix}\right| \alpha J=F^TJF, v\in\RR^{2n}, \alpha\neq 0\right\}.
    \end{align*}
\end{lemma}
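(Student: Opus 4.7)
The plan is to exploit the intrinsic characterization of the one-dimensional center $\ZZ=\RR\langle\xi\rangle$. First I would note that any $\phi\in\Aut(\hn)$ preserves $\ZZ$, since the center is invariant under any Lie algebra isomorphism; hence $\phi(\xi)=\alpha\xi$ for some scalar, necessarily nonzero because $\phi$ is injective. Writing $\phi$ as a matrix in the basis $(e_1,\dots,e_n,f_1,\dots,f_n,\xi)$, this says that the last column equals $(0,\dots,0,\alpha)^\t$, so $\phi$ automatically has the block form $\begin{pmatrix} F & 0 \\ v^\t & \alpha\end{pmatrix}$ with $F\in\operatorname{End}(\RR^{2n})$ and $v\in\RR^{2n}$. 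No constraint on $F$, $v$, $\alpha$ has been used yet.

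The remaining condition arises from requiring $\phi$ to respect the bracket~\eqref{eq:hnkom}. For $a,b\in\RR^{2n}$, I would compute in two ways. On the one hand $\phi([a,b])=\phi(\omega(a,b)\xi)=\alpha\,\omega(a,b)\xi=\alpha\, a^\t Jb\cdot\xi$. On the other hand, since $\phi(a)=Fa+(v^\t a)\xi$ and similarly for $b$, and since $\xi$ is central, $[\phi(a),\phi(b)]=[Fa,Fb]=\omega(Fa,Fb)\xi=a^\t F^\t JF\, b\cdot \xi$. Equating for all $a,b\in\RR^{2n}$ yields the single matrix identity $\alpha J=F^\t JF$. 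The bracket relations involving $\xi$ are trivially preserved. Crucially, $v$ does not enter this computation at all, so it remains completely free — this is precisely what accounts for the $\RR^{2n}$ translation factor in the semidirect product description.

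To finish I would verify the converse: any matrix of the stated form is an automorphism. Taking determinants in $\alpha J=F^\t JF$ and using $\det J=1$ (from $J^2=-E$) gives $(\det F)^2=\alpha^{2n}$, so $\alpha\neq 0$ is equivalent to $\det F\neq 0$; then $\det\begin{pmatrix} F&0\\v^\t&\alpha\end{pmatrix}=\alpha\det F\neq 0$, so $\phi$ is a linear bijection of $\hn$, and reversing the bracket computation above shows it respects commutators. The semidirect product interpretation then reads off the matrix data: the translation subgroup $\RR^{2n}$ corresponds to varying $v$, the symplectic group $Sp(2n,\RR)$ to the locus $\alpha=1$, and the additional one-parameter factor (the ``$1$-dimensional ideal'') to the rescaling freedom in $\alpha$. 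No serious obstacle appears in the argument; the one step requiring a little care is the bookkeeping in the bracket computation that extracts $\alpha J=F^\t JF$, and the mild conceptual point worth emphasizing is the freedom of $v$.
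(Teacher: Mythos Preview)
Your argument is correct and is in fact the standard one: preserve the center to get the block-lower-triangular form, then impose compatibility with the bracket~\eqref{eq:hnkom} to extract $\alpha J=F^\t JF$, and check invertibility for the converse. The only cosmetic point is that $J^2=-E$ alone gives $(\det J)^2=1$, not $\det J=1$; the latter is true for the specific $J$ of~\eqref{eq:J} (it acts as a rotation by $\pi/2$ on each $(e_i,f_i)$-plane), so nothing is actually wrong.

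There is nothing to compare against in the paper itself: the lemma is quoted from~\cite{Vuk2015} and no proof is supplied here. Your write-up would serve perfectly well as the omitted justification.
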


\begin{theorem}[\!\!\cite{Vuk2015}]\label{thm:R}
Any left-invariant Riemannian metric on $\Hn$, up to the automorphism of Heisenberg algebra, is represented in the left-invariant basis by the block matrices
\begin{align}\label{eq:Riem}
  &\begin{pmatrix}
    D(\sigma) & 0& 0\\
    0 & D(\sigma) & 0\\
    0 & 0 & \tau
  \end{pmatrix},\quad \tau>0,
\intertext{where} &
\label{eq:Dsigma}
D(\sigma)=\diag(\sigma_1,\sigma_2,\ldots,\sigma_{n-1},\sigma_n), \quad \sigma_1\geq\sigma_2\geq\ldots\geq\sigma_{n-1}\geq\sigma_n=1.
\end{align}
\end{theorem}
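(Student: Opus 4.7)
The plan is to identify a left-invariant Riemannian metric on $\Hn$ with a positive-definite symmetric matrix $G$ on $\hn$, and reduce $G$ to the stated normal form using the congruence action $G \mapsto \Phi^\t G \Phi$ of $\Phi \in \Aut(\hn)$ from the preceding Lemma. I would proceed in three successive steps, each exploiting one of the three parameter blocks $(F,v,\alpha)$ of a general automorphism.

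First, with respect to the decomposition $\hn = \RR^{2n} \oplus \ZZ$ write
\[
G = \begin{pmatrix} A & b \\ b^\t & c \end{pmatrix}, \qquad A = A^\t > 0,\ b \in \RR^{2n},\ c > 0,
\]
and take $\Phi$ with $F$ equal to the $2n \times 2n$ identity, $\alpha = 1$, and $v = -b/c$; the relation $\alpha J = F^\t J F$ is then automatic. A direct computation gives
\[
\Phi^\t G \Phi = \begin{pmatrix} A - bb^\t/c & 0 \\ 0 & c \end{pmatrix},
\]
whose upper-left block (a Schur complement of $G$) is still positive definite, so one may assume $b = 0$.

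Next, restricting to $v = 0$ and $\alpha = 1$ forces $F$ into the symplectic group $Sp(2n, \RR) = \{F : F^\t J F = J\}$. By Williamson's normal-form theorem, any positive-definite real symmetric matrix $A$ of size $2n$ admits such an $F$ with
\[
F^\t A F = \begin{pmatrix} D(\sigma) & 0 \\ 0 & D(\sigma) \end{pmatrix}, \qquad \sigma_1 \geq \sigma_2 \geq \cdots \geq \sigma_n > 0,
\]
where the $\sigma_i$ are the uniquely determined (ordered) symplectic eigenvalues of $A$; equivalently, they are the moduli of the purely imaginary eigenvalues of $J^{-1}A$. The scalar $c$ is unaffected.

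Finally, for any $\alpha > 0$, choosing $v = 0$ and $F = \sqrt{\alpha}$ times the $2n \times 2n$ identity satisfies $F^\t J F = \alpha J$; applying this automorphism multiplies the upper block by $\alpha$ and the center entry by $\alpha^2$. Selecting $\alpha = 1/\sigma_n$ then normalizes the smallest symplectic eigenvalue to $1$ and yields $\tau = c/\sigma_n^2 > 0$, producing the canonical form~\eqref{eq:Riem}--\eqref{eq:Dsigma}. The substance of the argument is concentrated in the middle step, namely Williamson's classical theorem on simultaneous symplectic diagonalization of a positive-definite quadratic form; Steps 1 and 3 are routine matrix manipulations.
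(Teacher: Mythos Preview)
The paper does not supply its own proof of this theorem: it is quoted verbatim from \cite{Vuk2015} and used only as background for the classification of sub-Riemannian structures. Your argument is correct and is, in fact, the standard route (and essentially the one taken in \cite{Vuk2015}): kill the off-diagonal block with the translation part of $\Aut(\hn)$, invoke Williamson's theorem via the symplectic factor $Sp(2n,\RR)$ to put the $\RR^{2n}$-block in the form $\diag(D(\sigma),D(\sigma))$, and finally rescale by $F=\sqrt{\alpha}\,E$ to normalize $\sigma_n=1$. There is nothing to add or correct.
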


\begin{theorem}[\!\!\cite{BiggsNagy2013}]\label{thm:SR}
  Any left-invariant sub-Riemannian structure $(\DD, g)$ on $\Hn$ is isometric to exactly one of the structures $(\bar\DD, g^\sigma)$ given by:
 \begin{align}\label{ex:subriemann}
  \begin{cases}
   \bar\DD(e) = {\LL \{e_1, \dots , e_n, f_1, \dots , f_n\},}\\
   g^\sigma  = \diag(\sigma_1,\sigma_2,\ldots,\sigma_{n-1},1,\sigma_1,\sigma_2,\ldots,\sigma_{n-1},1),
   \end{cases}
 \end{align}
 with $\sigma_1\geq\sigma_2\geq\ldots\geq\sigma_{n-1}\geq1$.
\end{theorem}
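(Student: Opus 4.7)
The plan is a two-step reduction via the automorphism group $\Aut(\hn)$. First I would transport the distribution $\DD$ onto the canonical horizontal subspace $\bar\DD(e)=\RR^{2n}$; then I would use the stabilizer of $\bar\DD(e)$ in $\Aut(\hn)$ to normalize an arbitrary left-invariant inner product on it to the form $g^{\sigma}$ in~\eqref{ex:subriemann}.

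For the first step, a left-invariant rank-$2n$ distribution $\DD$ is determined by $V:=\DD(e)\subset\hn$ of dimension $2n$. If $V\supset\ZZ$, then writing $V=\ZZ\oplus W$ with $W\subset\RR^{2n}$ of dimension $2n-1$, the commutator relation~\eqref{eq:hnkom} gives $[V,V]=[W,W]\subset\ZZ\subset V$, contradicting the bracket-generating property required for a sub-Riemannian structure. Hence $V$ is transverse to the center, so it is the graph $V=\{(u,\varphi^{\t}u):u\in\RR^{2n}\}$ of a linear form with representative $\varphi\in\RR^{2n}$. The automorphism $\begin{pmatrix} E & 0\\ -\varphi^{\t} & 1\end{pmatrix}$, which manifestly satisfies $1\cdot J=E^{\t}JE$, carries $V$ onto $\bar\DD(e)=\RR^{2n}$.

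For the second step, the stabilizer of $\bar\DD(e)$ in $\Aut(\hn)$ consists of the block-diagonal automorphisms $\begin{pmatrix} F & 0\\ 0 & \alpha\end{pmatrix}$ with $F^{\t}JF=\alpha J$; taking $\alpha>0$, we may write $F=\sqrt{\alpha}\,S$ with $S\in Sp(2n,\RR)$. Such a map transforms the Gram matrix $G$ of any left-invariant inner product on $\bar\DD(e)$ according to $G\mapsto F^{\t}GF=\alpha\,S^{\t}GS$. By Williamson's normal form for a positive-definite symmetric matrix on a symplectic vector space, there exists $S\in Sp(2n,\RR)$ such that
\[ S^{\t}GS=\diag(\lambda_1,\dots,\lambda_n,\lambda_1,\dots,\lambda_n),\qquad \lambda_1\geq\dots\geq\lambda_n>0, \]
where $\lambda_1,\dots,\lambda_n$ are the symplectic eigenvalues of $G$ (the moduli of the eigenvalues of $J^{-1}G$). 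Choosing $\alpha=1/\lambda_n$ and setting $\sigma_i=\lambda_i/\lambda_n$ delivers the prescribed form $g^{\sigma}$.

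The main obstacle is the uniqueness clause (\emph{exactly one}): distinct ordered tuples $(\sigma_1,\dots,\sigma_{n-1})$ must yield non-isometric sub-Riemannian structures. For this one needs the rigidity statement that any isometry between two left-invariant sub-Riemannian structures on $\Hn$ is, up to a left translation, induced by an automorphism of $\hn$. Granted this, every equivalence between two normal forms is realized by a conformal symplectic map on $\bar\DD(e)$, and such maps preserve the symplectic spectrum up to a common scaling; since the normalization $\sigma_n=1$ kills that scalar, the ratios $\sigma_i=\lambda_i/\lambda_n$ are complete invariants. Establishing this rigidity step is, in my view, the most delicate part of the argument.
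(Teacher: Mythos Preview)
The paper does not prove this theorem; it is quoted from Biggs--Nagy~\cite{BiggsNagy2013} as a background classification result, so there is no in-paper argument to compare your proposal against.

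Judged on its own, your two-step reduction is correct. A bracket-generating $V\subset\hn$ must project onto all of $\RR^{2n}$ (since $[V,V]\subset\ZZ$), which forces $\dim V=2n$ and transversality to $\ZZ$; the translation automorphisms then move $V$ to $\bar\DD(e)$ exactly as you describe. The stabilizer of $\bar\DD(e)$ in $\Aut(\hn)$ is the conformal symplectic group, and Williamson's normal form followed by the rescaling $\alpha=1/\lambda_n$ yields $g^\sigma$. For uniqueness under this group, your observation that conformal symplectic maps scale the symplectic spectrum, together with the normalization $\sigma_n=1$, is enough.

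You correctly isolate the one genuine gap: the ``exactly one'' clause needs that any isometry between two left-invariant sub-Riemannian structures on $\Hn$ is, after a left translation, realized by an element of $\Aut(\hn)$. This is true for the Heisenberg group but is not a formality, and your outline does not supply it; it is precisely the rigidity ingredient that~\cite{BiggsNagy2013} provides to complete the classification.
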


\section{Geodesic flows of Carnot-Carath\'{e}odory metrics}\label{sec:geodfl}

In the following, we analyze two different classes of normal geodesic flows corresponding to the left-invariant metric on the $(2n+1)$-dimensional Heisenberg group as integrable Hamiltonian systems. The first class corresponds to the left-invariant distribution, while the second corresponds to the right-invariant one. The results for the LL systems are already known in a more general context and are only presented here to provide a self-contained presentation (see Theorem~\ref{thm:left}). The most important result of this section pertains to the LR systems (see Theorem~\ref{thm:right}).

\subsection{Integrability of geodesic flow of the left-invariant metric on left-invariant distribution}\label{ssec:LL}

Consider the coordinates $(x_1,\ldots,x_n,y_1,\ldots,y_n,z)=(x,y,z)$.

The group $\Hn$ acts on itself by left translations $L_g$: $L_g(h)=gh$. Denote by $\DD_0$ the linear space spanned by $\frac{\partial}{\partial x_1}, \dots , \frac{\partial}{\partial x_n}, \frac{\partial}{\partial y_1}, \dots , \frac{\partial}{\partial y_n}$ at the identity. The left-invariant distribution generated by $\DD_0$ consists of $(2n)$-planes $\DD_L=L_{g*}\DD_0$, where:
\begin{align*}
L_{g*}(\frac{\partial}{\partial x_k})=\frac{\partial}{\partial x_k},\quad L_{g*}(\frac{\partial}{\partial y_k})=\frac{\partial}{\partial y_k} + x_k \frac{\partial}{\partial z}\quad\text{and}\quad L_{g*}(\frac{\partial}{\partial z})=\frac{\partial}{\partial z}.
\end{align*}
We note that $\Hn$ is a contact manifold and $\DD_L$ is a contact distribution.

The left-invariant Riemannian metric~\eqref{eq:Riem} in coordinates $(x,y,z)$ has the form:
\begin{align}\label{eq:leftR}
[g_{ij}(x,y,z)]=\begin{pmatrix}
    D(\sigma) & 0 & 0\\
    0 & \tau xx^\t+D(\sigma) & -\tau x\\
    0 & -\tau x^\t & \tau
  \end{pmatrix},
\end{align}
where $D(\sigma)$ is given by~\eqref{eq:Dsigma} and $\tau>0$.
The geodesic flow of the sub-Riemannian metric corresponding to the Riemannian metric~\eqref{eq:leftR} and the distribution $\DD_L$ is by Theorem~\ref{thm:HS} Hamiltonian on $T^*\Hn$ with the Hamiltonian function:
\begin{align}\label{eq:HamL}
  H(q,\lambda)=\frac{1}{2}\sum_{k=1}^n \frac{1}{\sigma_k}[\lambda_k^2+(\lambda_{k+n}+x_k\lambda_{2n+1})^2],
\end{align}
where $q=(x_1,\ldots,x_n,y_1,\ldots,y_n,z)$. The Hamilton equations have the form:
\begin{align*}
  \dot{x}_k&=\{x_k, H\} =\frac{1}{\sigma_k}\lambda_k, & \dot{\lambda}_k &=\{\lambda_k, H\}=-\dot{y}_{k}\lambda_{2n+1},\\
  \dot{y}_k&=\{y_k, H\}=\frac{1}{\sigma_k}(\lambda_{k+n}+x_k\lambda_{2n+1}), & \dot{\lambda}_{k+n}&=\{\lambda_{k+n}, H\}= 0,\\
  \dot{z} &=\{z, H\}= \sum_{j=0}^n x_j \dot{y}_{j} & \dot{\lambda}_{2n+1}&=\{\lambda_{2n+1}, H\}= 0,
\end{align*}
for $k=1,\ldots,n$. 

Since $z$ is a cyclic coordinate, we can restrict the flow to the level set $\lambda_{2n+1}=C$. We can project the restriction of the flow to the hyperplane $(x_1,\ldots,x_n,y_1,\ldots,y_n)$. In this way, we consider the restricted system on the $4n$-dimensional symplectic manifold, which is diffeomorphic to the cotangent bundle of the hyperplane with coordinates $(x_1,\ldots,x_n,y_1,\ldots,y_n, \lambda_1,\ldots,\lambda_{2n})$, but with a different Poisson structure. 

The following theorem states that both the original and the restricted LL systems are integrable in the non-commutative sense.
\begin{theorem}\label{thm:left}
\begin{enumerate}[wide, topsep=0pt, itemsep=10pt,  labelindent=0pt, label = (\alph*)]
  \item  The geodesic flow corresponding to the left-invariant Riemannian metric~\eqref{eq:leftR} on $\Hn$ and the left-invariant distribution $\DD_L$, is a Hamiltonian system on $T^*\Hn$ with the Hamiltonian function~\eqref{eq:HamL}, integrable in a non-commutative sense via the first integrals:
  \begin{align}\label{eq:intLL}
&I_0 =\lambda_{2n+1},\  I_k = \lambda_{k+n},\  I_{k+n}=\lambda_k+y_{k}\lambda_{2n+1},\  I_{2n+k}=\lambda_{k}^2+(\lambda_{n+k}+\lambda_{2n+1}x_k)^2,\  k=1,\ldots, n,
\end{align}
  which are functionally independent almost everywhere. Moreover, the subset $\{\lambda_{2n+1}\neq 0\}$ of its phase space is foliated into general helices:
  \begin{align}\label{eq:traj}
    x_k(t) &= a_k \cos(\tau_k t) + b_k \sin(\tau_k t) + c_k,\quad
    y_k(t) = -b_k \cos(\tau_k t) + a_k \sin(\tau_k t) + d_k,\\ 
    z(t)&=c_0{+}\sum_{k=1}^n\left(\frac{1}{2} \tau _k t
   \left(a_k^2+b_k^2\right){+}\frac{1}{4}\left(a_k^2-b_k^2\right) \sin
   \left(2\tau _k t\right){-}\frac{1}{2} a_k b_k \cos \left(2\tau_k t\right){+} a_k c_k \sin \left(\tau _k t\right)-b_k c_k \cos \left(\tau_k t\right)\right), \notag
\end{align}
with $a_k,b_k,c_0,c_k,d_k\in\RR$ and $\tau_k=\dfrac{\lambda_{2n+1}}{\sigma_k}$, $k=1,\ldots,n$. 

  \item The geodesic flow obtained by the restriction of the geodesic flow to the level set $\lambda_{2n+1}=C$ and its projection to the hyperplane $(x,y)$ is equivalent to the Hamiltonian system describing the motion of a charged particle in the constant magnetic field $F=-C \sum_{i=1}^n \frac{1}{\sigma_i}\, dx_i\wedge dy_i$. This flow is super integrable and its phase space for $C\neq0$ is foliated into trajectories~\eqref{eq:traj} that are either closed or dense on a torus $T^n$ (depending on a rational dependency of $C$ and $\sigma_k$ constants from the metric).
    \end{enumerate}
\end{theorem}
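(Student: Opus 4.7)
The plan is to exploit the fact that fixing $\lambda_{2n+1}$ and the conserved momenta $\lambda_{n+k}$ decouples the Hamilton equations into $n$ independent harmonic oscillators in the $(x_k,y_k)$-planes. The Hamiltonian form~\eqref{eq:HamL} of the geodesic flow is an immediate consequence of Theorem~\ref{thm:HS} applied to the cometric dual to the horizontal restriction of the metric~\eqref{eq:leftR}, which in the left-invariant horizontal frame equals $\diag(\sigma_1^{-1},\ldots,\sigma_n^{-1},\sigma_1^{-1},\ldots,\sigma_n^{-1})$.

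For part (a), I would first verify that each listed $I_j$ Poisson-commutes with $H$: $I_0=\lambda_{2n+1}$ and $I_k=\lambda_{n+k}$ are conserved because $z$ and $y_k$ are cyclic; $I_{k+n}=\lambda_k+y_k\lambda_{2n+1}$ by the direct computation $\dot I_{k+n}=\dot\lambda_k+\dot y_k\lambda_{2n+1}=0$ (using $\dot\lambda_k=-\dot y_k\lambda_{2n+1}$); and $I_{2n+k}$ by a short bracket calculation, once one observes that $H=\tfrac{1}{2}\sum_k\sigma_k^{-1}I_{2n+k}$ and that the variables of the $k$-th block do not interact with those of other blocks except via $\lambda_{2n+1}$. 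Next, I would tabulate all pairwise Poisson brackets of the $I_j$: a routine computation shows that every $I_{2n+k}$ is central in the algebra $\mathcal{F}$ (in particular $\{I_{k+n},I_{2n+l}\}$ vanishes by cancellation of the $\pm 2(\lambda_{n+k}+\lambda_{2n+1}x_k)\lambda_{2n+1}\delta_{kl}$ contributions coming from the $\lambda_k$- and $y_k$-derivatives), and the only nontrivial relation is $\{I_k,I_{l+n}\}=-I_0\delta_{kl}$. Hence on $\{\lambda_{2n+1}\neq 0\}$ the Poisson matrix of the $I_j$ has rank $2n$, so $\ddim\mathcal{F}=3n+1$ and $\dind\mathcal{F}=n+1$, and $\ddim\mathcal{F}+\dind\mathcal{F}=4n+2=\dim T^*\Hn$ yields non-commutative completeness in the sense of~\eqref{eq:complA}. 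Functional independence on an open dense set follows from inspection of the Jacobian: $dI_0$, $dI_k$, $dI_{k+n}$, and $dI_{2n+k}$ contribute independent directions via the $\lambda_{2n+1}$-, $\lambda_{n+k}$-, $\lambda_k$-, and $x_k$-derivatives respectively.

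The explicit trajectories follow by integrating Hamilton's equations with $\lambda_{2n+1}$ and $\lambda_{n+k}$ frozen: one obtains $\ddot x_k=-\tau_k\dot y_k$ and $\ddot y_k=\tau_k\dot x_k$ with $\tau_k=\lambda_{2n+1}/\sigma_k$, whose general solution is the uniform circular motion appearing in~\eqref{eq:traj}, and $z(t)$ is recovered by an elementary trigonometric integration of $\dot z=\sum_k x_k\dot y_k$. For part (b), restriction to $\{\lambda_{2n+1}=C\}$ and projection to the $(x,y)$-hyperplane carries $H$ into $\tfrac{1}{2}\sum_k\sigma_k^{-1}\bigl(\lambda_k^2+(\lambda_{n+k}+Cx_k)^2\bigr)$. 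Treating $-Cx_k\,dy_k$ as a vector potential identifies the reduced flow with the motion of a charged particle in the magnetic two-form $F=-C\sum_i\sigma_i^{-1}\,dx_i\wedge dy_i$. On the $4n$-dimensional reduced symplectic space the restrictions of $I_1,\ldots,I_{3n}$ still satisfy $\ddim=3n$ and $\dind=n$ (the $I_{2n+k}$ remain Casimirs), so $3n+n=4n$ and the reduced system is super-integrable; the projected trajectories are products of $n$ circles of frequencies $\tau_k=C/\sigma_k$, hence closed when these frequencies are pairwise commensurable and dense on $T^n$ otherwise.

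The main obstacle will be the careful bookkeeping of signs and indices in the Poisson-bracket computations that simultaneously establish the conservation of $I_{2n+k}$ and its Casimir property, and the proper identification of the reduced system with a magnetic-field Hamiltonian—in particular the origin of the factor $\sigma_i^{-1}$ in $F$, which reflects the anisotropy of the underlying metric. Once these points are settled, the remaining ingredients (functional independence, the rank count on the Poisson matrix, and the trigonometric integration yielding $z(t)$) are straightforward.
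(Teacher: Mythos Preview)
Your proposal is correct and self-contained. Note, however, that the paper does not actually prove Theorem~\ref{thm:left}: after writing down the Hamilton equations and observing that $z$ is cyclic, it simply states the theorem and, in the introduction, refers the reader to the general results in~\cite{Jov,MM2017,Pod2023,BiggsNagy2016}. Your argument replaces that appeal to the literature by an elementary direct verification: you check each $\{I_j,H\}=0$ by hand, compute the full Poisson table of the $I_j$ (finding the single nontrivial relation $\{I_k,I_{l+n}\}=-I_0\delta_{kl}$ and the centrality of $I_{2n+k}$), and read off $\ddim\mathcal{F}=3n+1$, $\dind\mathcal{F}=n+1$ to close the count~\eqref{eq:complA}. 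The trajectory formulas and the magnetic-field identification in part~(b) you obtain by the obvious decoupling into $n$ planar oscillators. What your route buys is a proof that is independent of the cited machinery and makes the algebra $\mathcal{F}$ completely transparent; what the paper's route buys is context, since the LL result is a known instance of integrability of invariant sub-Riemannian flows on nilpotent groups. One small point to tidy up in your write-up: your own final paragraph flags it, but be explicit that the factor $\sigma_i^{-1}$ in $F$ arises because the equations of motion read $\ddot x_k=-\tau_k\dot y_k$, $\ddot y_k=\tau_k\dot x_k$ with $\tau_k=C/\sigma_k$, so the \emph{effective} Lorentz force corresponds to $F=-C\sum_i\sigma_i^{-1}\,dx_i\wedge dy_i$ rather than to $d(-Cx_k\,dy_k)$ computed naively.
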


\subsection{Integrability of geodesic flow of the left-invariant metric on right-invariant distribution}\label{ssec:RL}

Now, let us fix the right-invariant distribution is $\DD_R=R_{g*}\DD_0$, where $R_g$ denotes the right translation and $R_{g*}:T_e\Hn\rightarrow T_g\Hn$: 
\begin{align*}
R_{g*}(\frac{\partial}{\partial x_k})=\frac{\partial}{\partial x_k}+ y_k \frac{\partial}{\partial z},\quad R_{g*}(f_k)=\frac{\partial}{\partial y_k}\quad\text{and}\quad R_{g*}(z)=\frac{\partial}{\partial z}.
\end{align*}

Let us consider only the case of the standard metric, i.e., let us assume that $\tau$ and all $\sigma_k$ values in~\eqref{eq:Riem} are equal to 1. Similar to the LL case, we can formulate the following theorem.
\begin{theorem}\label{thm:right}
\begin{enumerate}[wide, topsep=0pt, itemsep=10pt,  labelindent=0pt, label = (\alph*)]
    \item \label{it:a}
    The geodesic flow of the canonical left-invariant Riemannian metric on $\Hn$ and the right-invariant distribution $\DD_R$, is a Hamiltonian system on $T^*\Hn$  with the Hamiltonian function~\eqref{eq:HamR}, integrable via the first integrals~\eqref{eq:intRL}, which are functionally independent almost everywhere. If $n=1$ or $n=2$, the set $\mathcal{F}$ of first integrals is commutative. For $n > 2$, the system is integrable in a non-commutative sense.

    \item\label{it:b} The system obtained by projecting the geodesic flow restricted to the level $\lambda_{2n+1}=C$ onto $\RR^{2n}$ is equivalent to the Hamiltonian system describing the motion of a charged particle in the constant magnetic field $F=C\sum_{i=1}^n dx_i\wedge dy_i$. This system is also completely integrable and in hyperspherical coordinates~\eqref{eq:hscoor}, its first integrals are given by~\eqref{eq:intRLred}--\eqref{eq:intRLredN}.
    \end{enumerate}
\end{theorem}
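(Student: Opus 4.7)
The plan is to follow the template of the proof of Theorem~\ref{thm:left}: write the sub-Riemannian Hamiltonian \eqref{eq:HamR} explicitly, exhibit a complete list of first integrals, verify their functional independence, and then analyse the Poisson algebra $\mathcal{F}$ so as to pin down $\ddim\mathcal{F}+\dind\mathcal{F}$. Using the right-invariant basis $X_k=\partial_{x_k}+y_k\partial_z$, $Y_k=\partial_{y_k}$ of $\DD_R$ and the corresponding momentum functions $P_{X_k}=\lambda_k+y_k\lambda_{2n+1}$, $P_{Y_k}=\lambda_{n+k}$, Hamilton's equations take the form of $n$ harmonic pairs with common frequency $\lambda_{2n+1}$: $\dot P_{X_k}=\lambda_{2n+1}P_{Y_k}$ and $\dot P_{Y_k}=-\lambda_{2n+1}P_{X_k}$. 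Since $z$ is cyclic, $\lambda_{2n+1}$ is immediately conserved.

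The integrals in \eqref{eq:intRL} then split into four families: the central generator $\lambda_{2n+1}$; the linear momenta $\lambda_k$, conserved because the Hamiltonian has no $x_k$-dependence when written in the variables $P_{X_k},P_{Y_k}$; the per-copy energies $E_k=P_{X_k}^2+P_{Y_k}^2$, whose conservation is immediate from the harmonic structure; and the Larmor-centre functions $X_{c,k}=\lambda_{2n+1}x_k+\lambda_{n+k}$, conserved by a one-line telescoping of $\dot x_k$ and $\dot\lambda_{n+k}$. One further adjoins the $\binom{n}{2}$ angular momenta $L_{ij}=x_i\lambda_j-x_j\lambda_i+y_i\lambda_{n+j}-y_j\lambda_{n+i}$ generated by the diagonal $SO(n)$-action $(x,y)\mapsto(Rx,Ry)$, which one checks is simultaneously an isometry of the canonical metric and a symmetry of $\DD_R$. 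Functional independence then follows by exhibiting a block submatrix of the full Jacobian whose determinant is a nonzero monomial in the momenta on a dense open subset.

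The Poisson-bracket analysis is the heart of the argument. A direct calculation shows that $\lambda_{2n+1}$ is central in $\mathcal{F}$, that $\{\lambda_j,X_{c,k}\}=-\lambda_{2n+1}\delta_{jk}$ so that the pairs $(\lambda_k,X_{c,k})$ form $n$ commuting Heisenberg pairs sharing the Casimir $\lambda_{2n+1}$, and that each $E_k$ Poisson-commutes with every $\lambda_j$, $X_{c,l}$ and $E_l$; meanwhile the $L_{ij}$'s satisfy the standard $\mathfrak{so}(n)$ relations and act on $(\lambda_\cdot)$, $(X_{c,\cdot})$ as the defining vector representation, and on individual $E_k$'s by cross-terms $2(P_{X_i}P_{X_j}+P_{Y_i}P_{Y_j})$ which vanish only after summing over $k$. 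The non-abelian part of $\mathcal{F}$ is therefore exactly $\mathfrak{so}(n)$, so $\mathcal{F}$ is commutative precisely when $n\in\{1,2\}$. A rank computation then yields $\ddim\mathcal{F}+\dind\mathcal{F}=4n+2$ for every $n$, establishing completeness and hence non-commutative integrability for $n>2$ and ordinary Liouville integrability for $n\le 2$.

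For part~\ref{it:b}, I would restrict the flow to $\{\lambda_{2n+1}=C\}$ and project out the cyclic variable $z$. The reduced Hamiltonian on $T^*\RR^{2n}$ is then, read directly in the Landau gauge, the Hamiltonian of a charged particle in the magnetic field $F=C\sum_i dx_i\wedge dy_i$, since each pair $(x_k,y_k)$ satisfies the Larmor equations $\ddot x_k=C\dot y_k$, $\ddot y_k=-C\dot x_k$. Passing to the hyperspherical coordinates \eqref{eq:hscoor} on $\RR^{2n}$, the conserved energies $E_k$ together with the chain of successive partial angular momenta furnish the explicit commuting family~\eqref{eq:intRLred}--\eqref{eq:intRLredN}. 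The step I expect to be hardest is not the identification of the integrals but the careful Poisson-bracket bookkeeping needed to pin down the $n=2$/$n=3$ transition and to verify $\ddim\mathcal{F}+\dind\mathcal{F}=4n+2$ in each dimension.
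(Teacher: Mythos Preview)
Your plan rests on a computation that is wrong at the very first step. You implicitly assume that the right-invariant frame $X_k=\partial_{x_k}+y_k\partial_z$, $Y_k=\partial_{y_k}$ is \emph{orthonormal} for the canonical \emph{left}-invariant metric, so that the sub-Riemannian Hamiltonian becomes $\tfrac12\sum_k(P_{X_k}^2+P_{Y_k}^2)$ and decouples into $n$ harmonic pairs. It does not. A direct computation with the metric~\eqref{eq:leftR} (for $D(\sigma)=E$, $\tau=1$) gives the Gram matrix
\[
g_{ab}=\bigl(\langle X_a,X_b\rangle\bigr)=E_{2n}+(Jq)(Jq)^\t,\qquad q=(x,y),
\]
whose inverse, by Sherman--Morrison, is $E_{2n}-\tfrac{1}{1+\langle q,q\rangle}(Jq)(Jq)^\t$. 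This is exactly the origin of the correction term in~\eqref{eq:HamR} and in the reduced Hamiltonian $H_C(q,p)=\tfrac12\bigl(\langle p,p\rangle-\tfrac{1}{1+\langle q,q\rangle}\omega(q,p)^2\bigr)$. The extra $\omega(q,p)^2/(1+\langle q,q\rangle)$ couples all $2n$ coordinates together, so the system is \emph{not} $n$ independent Larmor pairs, and your claimed integrals $\lambda_k$, $E_k=P_{X_k}^2+P_{Y_k}^2$, $X_{c,k}=\lambda_{2n+1}x_k+\lambda_{n+k}$ are simply not conserved (one sees this already from $\dot{\bar\lambda}$ in~\eqref{eq:HamReq}, or from the $n=2$ equations in Example~\ref{ex:5}). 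What you have written down is essentially the proof of the LL Theorem~\ref{thm:left}, transplanted to the right-invariant frame; that works only when the metric and the distribution share the same invariance.

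The paper's route is genuinely different and reflects the difficulty of the LR case. After reducing by the cyclic variable $z$, the authors pass to hyperspherical coordinates~\eqref{eq:hscoor}, in which the reduced Hamiltonian $\tilde H_C$ exhibits a partial separation that produces the integrals~\eqref{eq:intRLred}: besides $\tilde H_C$ there are $n$ ``shifted angular momenta'' $\tilde I_1,\dots,\tilde I_n$ and $n-1$ quadratic combinations $\tilde I_{n+1},\dots,\tilde I_{2n-1}$ built from the $p_{\theta_j}$'s. The actual integrals~\eqref{eq:intRL} on $T^*\Hn$ are of the form $I_j=x_j\lambda_{n+j}-y_j\lambda_j+\tfrac12(x_j^2-y_j^2)\lambda_{2n+1}$ together with quadratic expressions $I_{n+k}$ and their brackets---nothing like your $\lambda_k$, $E_k$, $X_{c,k}$. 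The non-commutative count is then $\ddim\mathcal{F}=3n-2$, $\dind\mathcal{F}=n+2$ on the $4n$-dimensional reduced phase space (so commutative precisely for $n\le 2$), not the $4n+2$ you anticipate. If you want to repair your approach, the first thing to do is recompute the Hamiltonian using the correct Gram matrix above and observe that the coupling term kills every one of your proposed integrals except $\lambda_{2n+1}$.
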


\begin{proof}
In local coordinates $(x_1,\ldots,x_n,y_1,\ldots,y_n,z)=(x,y,z)$ the Hamiltonian function $H(x,y,z,\lambda)=\frac{1}{2}\sum_{i,j} g^{ij}\lambda_i\lambda_j$ can be written as follows:
\begin{align}\label{eq:HamR}
H=\frac{1}{2}\left(\bar\lambda^\t (E{-}\frac{1}{1{+}\langle q, q\rangle}(Jq)(Jq)^\t) \bar\lambda{+}2\lambda_{2n{+}1}\bar\lambda^\t(u{+}\frac{\langle u, u\rangle}{1{+}\langle q, q\rangle}Jq){+}\lambda_{2n{+}1}^2(\langle u, u\rangle{-}\frac{\langle u, u\rangle^2}{1{+}\langle q, q\rangle})\right),
\end{align}
where $(x,y,z)=(q,z)$, $u=(y,0)$, $\lambda=(\lambda_1,\ldots,\lambda_{2n},\lambda_{2n+1})=(\bar\lambda,\lambda_{2n+1})$ and where $J$ is given by~\eqref{eq:J} and $\langle\cdot,\cdot\rangle$ represents the standard product on $\RR^{2n}$. The Hamiltonian system takes the form:
\begin{gather}
\begin{aligned}\label{eq:HamReq}
    &\dot{q} =\{q, H\} =  (E-\frac{1}{1+\langle q, q\rangle}(Jq)(Jq)^\t)\bar{\lambda}+\lambda_{2n+1}(u+\frac{\langle u, u\rangle}{1+\langle q, q\rangle}Jq),\\
    &\dot{z}=\{z, H\}=\bar\lambda^\t(u+\frac{\langle u, u\rangle}{1+\langle q, q\rangle}Jq)+\lambda_{2n+1}(\langle u, u\rangle-\frac{\langle u, u\rangle^2}{1+\langle q, q\rangle}),\\
    &\dot{\bar\lambda}=\{\bar\lambda, H\}=-C_{q,\bar{\lambda}}^2 q - C_{q,\bar{\lambda}}\bar{\lambda}^\t J +\lambda_{2n+1}(2C_{q,\bar{\lambda}}-\lambda_{2n+1})Ju -\lambda_{2n+1} v,\\
    &\dot{\lambda}_{2n+1}=\{\lambda_{2n+1}, H\}=0,
\end{aligned}
\end{gather}
where $v=(0,\ldots,0,\lambda_1,\ldots,\lambda_n)$ and $C_{q,\bar{\lambda}}=\frac{\omega(q,\bar{\lambda})+\lambda_{2n+1}\langle u,u\rangle}{1+\langle q,q\rangle}$.
The $z$ coordinate is cyclic, thus we can restrict the flow to the level set $\lambda_{2n+1} = C$ and project this restriction to $\RR^{2n}$. This way we reduce the problem to the restricted Hamiltonian system on the $4n$-dimensional symplectic manifold. Set:
\begin{align*}
  p_k = \lambda_k + C y_k,\quad p_{k+n} = \lambda_{k+n},\quad k=1,\ldots,n.
\end{align*}
The Hamiltonian function takes the form:
\begin{align*}
  H_C(q, p)=\frac{1}{2}\left(\langle p, p\rangle-\frac{1}{1+\langle q, q\rangle}\,\omega(q,p)^2\right).
\end{align*}
The Poisson structure on the restricted manifold is given by:
\begin{align*}
  \{x_k,p_j\}&=\{y_k,p_{j+n}\}=\delta_{kj},\quad \{p_k,p_{j+n}\}=\delta_{kj}C,\\
   \{x_k, y_j\}&=\{x_k,p_{j+n}\}=\{y_k, p_j\}=\{p_j, p_k\}=\{p_{j+n}, p_{k+n}\}=0,
\end{align*}
for $k,j=1,\ldots,n$, and the Hamilton's system is:
\begin{align*}
    &\dot{q}=\{q, H_C\} = p+\frac{\omega(q,p)}{1+\langle q, q\rangle} J q,\\
    &\dot{p}=\{p, H_C\}=\left(-C+\frac{\omega(q,p)}{1+\langle q, q\rangle}\right)\left(Jp-\frac{\omega(q,p)}{1+\langle q, q\rangle}  q\right).
\end{align*}
Note that this system describes the motion of a charged particle in the constant magnetic field $F=C\sum_{i=1}^n  dx_i\wedge dy_i$.

Now, we can introduce the new coordinates $(r, \theta_1,\theta_2,\ldots,\theta_{2n-1})$:
\begin{gather}
\begin{aligned}\label{eq:hscoor}
  x_{1} &= r \prod_{j=1}^{n-2}\cos(\theta_j)\cdot\cos(\theta_{n-1})\cdot\cos(\theta_{n}), & y_{1}&=r \prod_{j=1}^{n-2}\cos(\theta_j)\cdot\cos(\theta_{n-1})\cdot\sin(\theta_{n}),\\
 x_{k} &= r \prod_{j=1}^{n-k}\cos(\theta_j)\cdot\sin(\theta_{n-k+1})\cdot\cos(\theta_{n+k-1}), & y_{k}&=r \prod_{j=1}^{n-k}\cos(\theta_j)\cdot\sin(\theta_{n-k+1})\cdot\sin(\theta_{n+k-1}),
\end{aligned}
\end{gather}
for $k=2,\ldots,n$.  The Riemannian metric in these coordinates takes the form:
\begin{align*}
  dr^2 {+} {r^2} d\theta_1^2 {+} {r^2}\sum_{k=1}^{n-1}{\prod_{j=1}^{k-1}\cos^2(\theta_j)}\left(\cos^2(\theta_k){d\theta_{k+1}^2}{+}\sin^2(\theta_k){d\theta_{2n-k}^2}\right){+}r^4\sum_{k,j=1}^n \psi_k\psi_j d\theta_{n-1+k}d\theta_{n-1+j},
\end{align*}
with $\psi_1=\prod_{i=1}^{n-1}\cos^2(\theta_i)$ and $\psi_k=\prod_{i=1}^{n-k}\cos^2(\theta_i)\sin^2(\theta_{n-k+1})$, $k=2,\ldots,n$.

The associated coordinates $p_r$ and $p_{\theta_k}$, $k=1,\ldots,2n-1$, are:
\begin{align*}
  p_r = \frac{\langle q, p\rangle}{\sqrt{\langle q, q\rangle}},\quad
  p_{\theta_j}  = \begin{cases}
    \frac{1}{\alpha_{n+1-j}}\beta_{n+1-j}-\alpha_{n+1-j}\sum_{k=j+1}^n\beta_{n+1-k},& 1\leq j < n,\\
    x_{j+1-n} p_j - y_{j+1-n}p_{j+1-n},& n\leq j < 2n,
  \end{cases}
\end{align*}
where $\alpha_j=\frac{\sqrt{x_j^2+y_j^2}}{\sqrt{\langle q, q\rangle-(x_j^2+y_j^2)}}$ and $\beta_j = x_j p_j+y_j p_{n+j}$.

The Hamiltonian function now becomes:
\begin{align*}
  \tilde H_C(r,\theta,p_r,p_\theta)=\frac{1}{2}\left(p_r^2 +\frac{1}{r^2}\sum_{k=1}^{n-1}\frac{p_{\theta_k}^2}{\prod_{j=1}^{k-1}\cos^2(\theta_{j})}
  +\frac{1}{r^2}\rho_{n-1} - \frac{1}{1+r^2}\left(\sum_{k=n}^{2n-1}p_{\theta_k}\right)^2\right),
\end{align*}
where $\rho_k=\cos^{-2}(\theta_{n-k})\rho_{k-1}+{\sin^{-2}(\theta_{n-k})}p_{\theta_{n+k}}^2$, $\rho_0=p_{\theta_n}^2$.
Hamilton's equations are too complicated to write down in full and will be given only in the special case when $n=2$ (see Example~\ref{ex:5}). Here, we only provide the relationships that are necessary to determine the integrals:
\begin{flalign*}
 && &\{p_{\theta_{n}},  \tilde H_C\}=-\beta_1\lambda_{2n+1}=-\frac{1}{2}\{r^2\prod_{k=1}^{n-2}\cos^2\theta_k\cos^2\theta_{n-1},  \tilde H_C\}\lambda_{2n+1},&\\
&&  &\{p_{\theta_{i+n-1}},  \tilde H_C\}=-\beta_i\lambda_{2n+1}=-\frac{1}{2}\{r^2\prod_{k=1}^{n-i}\cos^2\theta_k\sin^2\theta_{n-i+1},  \tilde H_C\}\lambda_{2n+1},& 1< i\leq n,\\
&& & \{p^2_{\theta_{n-1}}, \tilde H_C\}=\{(p_{\theta_n}+p_{\theta_{n+1}})^2-p_{\theta_n}^2\cos^{-2}\theta_{n-1}-p_{\theta_{n+1}}^2\sin^{-2}\theta_{n-1},\tilde H_C\},&\\
&& & \{(p_{\theta_n}\prod_{k=1}^{i-1}\cos^{-1}\theta_{n-k}\tan\theta_{n-i}-p_{\theta_{n+i}}\prod_{k=1}^{i-1}\cos\theta_{n-k}\cot\theta_{n-i})^2, \tilde H_C\}=&\\
&& &-\{(p_{\theta_{n-i}}\prod_{k=1}^{i-1}\cos\theta_{n-k}+\varrho_i\tan\theta_{n-i})^2\},& 1 < i < n,\\
\text{where} && &\varrho_i=p_{\theta_{n-i}}\sin\theta_{n-i}\prod_{k=1}^{i-1}\cos\theta_{n-k}+\cos^{-1}\theta_{n-i}\varrho_{i-1},\ \varrho_0=0.&
\end{flalign*}
We deduce that this reduced system has the following functionally independent first integrals:
\begin{gather}
\begin{aligned}\label{eq:intRLred}
  \tilde{I}_0 &= \tilde H_C,\\
  \tilde{I}_1 &= p_{\theta_{n}}+\frac{1}{2}r^2\prod_{k=1}^{n-2}\cos^2\theta_k\cos^2\theta_{n-1}\lambda_{2n+1},\\
  \tilde{I}_i &=p_{\theta_{i+n-1}}+\frac{1}{2}r^2\prod_{k=1}^{n-i}\cos^2\theta_k\sin^2\theta_{n-i+1}\lambda_{2n+1},\quad 1<i\leq n,\\
  \tilde{I}_{n+1}&=p^2_{\theta_{n-1}}-(p_{\theta_n}+p_{\theta_{n+1}})^2+p_{\theta_n}^2\cos^{-2}\theta_{n-1}+p_{\theta_{n+1}}^2\sin^{-2}\theta_{n-1},\\
  \tilde{I}_{n+i}&=(p_{\theta_n}\prod_{k=1}^{i-1}\cos^{-1}\theta_{n-k}\tan\theta_{n-i}-p_{\theta_{n+i}}\prod_{k=1}^{i-1}\cos\theta_{n-k}\cot\theta_{n-i})^2\\
  &+  (p_{\theta_{n-i}}\prod_{k=1}^{i-1}\cos\theta_{n-k}+\varrho_i\tan\theta_{n-i})^2,\quad 1<i<n.
\end{aligned}
\end{gather}
Note that $\{\tilde{I}_{n+i}, \tilde{I}_{n+j}\}\neq 0$, for every $1\leq i < j < n$. 

For $n=1,2$, the algebra of integrals~\eqref{eq:intRLred} is complete and involutive, and $\ddim\mathcal{F}=\dind\mathcal{F}=2n$.

For $n> 2$ we have additional $n-2$ integrals 
\begin{align}\label{eq:intRLredN}
    \tilde{J}_{k} = \{\tilde{I}_{n+1}, \tilde{I}_{n+k}\},\quad 1 < k < n,
\end{align}
which do not affect the functional independence of the system. The formulas for these integrals are omitted because their form is too complicated. 
The total number of first integrals is $\ddim\mathcal{F}=3n-2$ and the first $n+1$ are in involution. The matrix:
\begin{align*}
    \begin{pmatrix}
        \{\tilde{I}_{n+1}, \tilde{I}_{n+1}\} & \{\tilde{I}_{n+1}, \tilde{I}_{n+k}\} & \{\tilde{I}_{n+1}, \tilde{J}_{k}\}\\
        \{\tilde{I}_{n+j}, \tilde{I}_{n+1}\} &  \{\tilde{I}_{n+j}, \tilde{I}_{n+k}\} &  \{\tilde{I}_{n+j}, \tilde{J}_{k}\}\\
        \{\tilde{J}_{j}, \tilde{I}_{n+1}\} &  \{\tilde{J}_{j}, \tilde{I}_{n+k}\} &   \{\tilde{J}_{j}, \tilde{J}_{k}\} &
    \end{pmatrix}
\end{align*}
is skew-symmetric, so its rank must be an even number. Since it's formed from $2n-3$ functionally independent integrals, the co-rank is exactly one.
Hence, $\dind\mathcal{F}=n+2$ and from~\eqref{eq:complA} we deduce that the reduced system is integrable in non-commutative sense.

In the original coordinates, the Hamiltonian system~\eqref{eq:HamR}-\eqref{eq:HamReq} has the following first integrals:
\begin{gather}
\begin{aligned}\label{eq:intRL}
  &I_0 = H, \\
  &I_j  = x_j\lambda_{n+j} - y_j\lambda_j +\frac{1}{2}(x_j^2-y_j^2)\lambda_{2n+1},\\
  &I_{n+k} = (y_{k+1}\lambda_1-x_1\lambda_{n+k+1}+x_{k+1}\lambda_{n+1}-y_{n+1}\lambda_{k+1})^2\\
  &\phantom{I_{n+k}}+(x_{1}\lambda_{k+1}-x_{k+1}\lambda_1-y_1\lambda_{n+k+1}+y_{k+1}\lambda_{n+1}+(x_1y_{k+1}-x_{k+1}y_1)\lambda_{2n+1})^2,\\
  &I_{2n}= \lambda_{2n+1},\\
  &I_{2n+l}=\{I_{n+1}, I_{n+l+1}\},
\end{aligned}
\end{gather}
for $ j=1,\ldots,n$, $k=1,\ldots,n-1$, $l=1,\ldots,n-2$. 
\end{proof}

Let us illustrate this theorem with an example.

\begin{example}\label{ex:5}
 For example, let us consider the geodesic flow of the sub-Riemannian metric corresponding to the left-invariant metric on $H_5$ and the right-invariant distribution $\DD_R=R_{g*}\DD_0$. This geodesic flow constitutes the Hamiltonian system on $T^* H_5$ with the Hamiltonian function:
    \begin{align*}
        H(q,\lambda)&=\frac{1}{2\left(x_1^2{+}x_2^2{+}y_1^2{+}y_2^2{+}1\right)}
        \left(\left(x_1^2{+}x_2^2{+}y_2^2{+}1\right) \lambda _1^2{+}\left(x_1^2{+}x_2^2{+}y_1^2{+}1\right) \lambda _2^2{+}\left(x_2^2{+}y_1^2{+}y_2^2{+}1\right) \lambda _3^2 \right.\\
   &{+}\left(x_1^2{+}y_1^2{+}y_2^2{+}1\right) \lambda _4^2{+}\left(x_1^2{+}x_2^2{+}1\right)
   \left(y_1^2{+}y_2^2\right) \lambda _5^2{-}2 x_1 x_2 \lambda _3 \lambda _4{+}2
   \left(y_1^2{+}y_2^2\right) \left(x_1 \lambda _3{+}x_2 \lambda _4\right) \lambda_5\\
   &\left.{+}2 y_1 \left({-}y_2 \lambda _2{+}x_1
   \lambda _3{+}x_2 \lambda _4{+}\left(x_1^2{+}x_2^2{+}1\right) \lambda _5\right) \lambda_1{+}2 y_2 \lambda _2 \left(x_1 \lambda _3{+}x_2 \lambda
   _4{+}\left(x_1^2{+}x_2^2{+}1\right) \lambda _5\right)
        \right),
    \end{align*}
    where $q=(x_1,x_2,y_1,y_2,z)$ and $\lambda=(\lambda_1,\ldots,\lambda_5)$. The Hamiltonian system takes the form:
\begin{align*}
      \dot{x_1} = \{x_1, H \} &= \frac{(1{+}x_1^2{+}x_2^2{+}y_2^2)\lambda_1{+}y_1(-y_2\lambda_2{+}x_1\lambda_3{+}x_2\lambda_4{+}(1{+}x_1^2{+}x_2^2)\lambda_5)}{1{+}x_1^2{+}x_2^2{+}y_1^2{+}y_2^2},\\
      \dot{x_2} = \{x_2, H \} &= \frac{-y_1y_2\lambda_1(1{+}x_1^2{+}x_2^2)\lambda_2{+}y_1^2\lambda_2{+}y_2(x_1\lambda_3{+}x_2\lambda_4{+}(1{+}x_1^2{+}x_2^2)\lambda_5)}{1{+}x_1^2{+}x_2^2{+}y_1^2{+}y_2^2},\\
      \dot{y_1} = \{y_1, H \} &= \frac{(1{+}x_1^2{+}x_2^2{+}y_2^2)\lambda_3{+}x_1(y_1\lambda_1-x_2\lambda_4{+}y_1^2\lambda_5{+}y_2(\lambda_2{+}y_2\lambda_5)}{1{+}x_1^2{+}x_2^2{+}y_1^2{+}y_2^2},\\
      \dot{y_2} = \{y_2, H \} &= \frac{(1{+}x_1^2{+}x_2^2{+}y_2^2)\lambda_4{+}x_2(y_1\lambda_1-x_1\lambda_3{+}y_1^2\lambda_5{+}y_2(\lambda_2{+}y_2\lambda_5)}{1{+}x_1^2{+}x_2^2{+}y_1^2{+}y_2^2},\\
      \dot{z} = \{z, H \} &= \frac{(1{+}x_1^2{+}x_2^2)y_1\lambda_1{+}(1{+}x_1^2{+}x_2^2)y_2\lambda_2{+}(y_1^2{+}y_2^2)(x_1\lambda_3{+}x_2\lambda_4){+}(1{+}x_1^2{+}x^2)(y_1^2{+}y_2^2)\lambda_5}{1{+}x_1^2{+}x_2^2{+}y_1^2{+}y_2^2},\\
      \dot{\lambda_1} = \{\lambda_1, H \} &= - \frac{1}{(1{+}x_1^2{+}x_2^2{+}y_1^2{+}y_2^2)^2}(y_1\lambda_1-x_1\lambda_3-x_2\lambda_4{+}y_1^2\lambda_5{+}y_2(\lambda_2{+}y_2\lambda_5))\cdot\\
      &\phantom{=-}((1{+}x_2^2{+}y_1^2{+}y_2^2)\lambda_3{+}x_1(y_1\lambda_1-x_2\lambda_4{+}y_1^2\lambda_5{+}y_2(\lambda_2{+}y_2\lambda_5))),\\
      \dot{\lambda_2} = \{\lambda_2, H \} &= - \frac{1}{(1{+}x_1^2{+}x_2^2{+}y_1^2{+}y_2^2)^2}(y_1\lambda_1-x_1\lambda_3-x_2\lambda_4{+}y_1^2\lambda_5{+}y_2(\lambda_2{+}y_2\lambda_5))\cdot\\
      &\phantom{=-}((1{+}x_1^2{+}y_1^2{+}y_2^2)\lambda_4{+}x_2(y_1\lambda_1-x_1\lambda_3{+}y_1^2\lambda_5{+}y_2(\lambda_2{+}y_2\lambda_5))),\\
      \dot{\lambda_3} = \{\lambda_3, H \} &= \frac{1}{(1{+}x_1^2{+}x_2^2{+}y_1^2{+}y_2^2)^2}(y_1\lambda_1{+}x_4\lambda_2-x_1\lambda_3-x_2\lambda_4-(1{+}x_1^2{+}x_2)\lambda_5)\cdot\\
      &\phantom{=-}((1{+}x_1^2{+}x_2^2{+}y_2^2)\lambda_1{+}x_3(-y_2\lambda_2{+}x_1\lambda_3{+}x_2\lambda_4{+}(1{+}x_1^2{+}x_2^2)\lambda_5)),\\
      \dot{\lambda_4} = \{\lambda_4, H \} &= \frac{1}{(1{+}x_1^2{+}x_2^2{+}y_1^2{+}y_2^2)^2}(y_1\lambda_1{+}x_4\lambda_2-x_1\lambda_3-x_2\lambda_4-(1{+}x_1^2{+}x_2)\lambda_5)\cdot\\
      &\phantom{=-}(-y_1y_2\lambda_1{+}(1{+}x_1^2{+}x_2^2)\lambda_2{+}y_1^2\lambda_2{+}y_2(x_1\lambda_3{+}x_2\lambda_4{+}(1{+}x_1^2{+}x_2^2)\lambda_5)),\\
      \dot{\lambda_5}=\{ z,H\} &= 0.
    \end{align*}
    This flow has the obvious two first integrals: $I_0=H$ and $I_1=\lambda_5$. As described above, we can restrict this flow to the level set $\lambda_5=C$ and project this restriction onto the hyperplane $(x_1,x_2,y_1,y_2)$. We introduce the new variables:
 \begin{align*}
 p_1 = \lambda_1 + C y_1, \quad p_2 = \lambda_2 + C y_2, \quad p_3 = \lambda_3,\quad p_4 = \lambda_4,
 \end{align*}
 that simplify the form of the Hamiltonian function:
\begin{align*}
     H_C(q,p)=\frac{1}{2} \left(p_1^2 + p_2^2 + p_3^2 + p_4^2 - \frac{(p_3 x_1 + p_4 x_2 - p_1 y_1 - p_2 y_2)^2}{1 + x_1^2 + x_2^2 + y_1^2 + y_2^2}\right).
 \end{align*}
The Poisson structure on a $8$-dimensional symplectic manifold is not canonical anymore:
\begin{align*}
  \{x_k,p_j\}&=\{y_k,p_{j+2}\}=\delta_{kj},\quad \{p_k,p_{j+2}\}=\delta_{kj}C,\\
   \{x_k, y_j\}&=\{x_k,p_{j+2}\}=\{y_k,p_j\}=\{p_j,p_k\}=\{p_{j+2},p_{k+2}\}=0,
\end{align*}
for $j, k=1,2$. Set $W=\frac{(p_3 x_1 + p_4 x_2 - p_1 y_1 - p_2 y_2)}{1 + x_1^2 + x_2^2 + y_1^2 + y_2^2}$ , the corresponding Hamilton's system is:
\begin{align*}
      \dot{x_1} &= \{x_1, H_C \} = p_1+y_1 W,&\dot{p_1} &= \{p_1, H_C \} =\left(C+W\right) (p_3-x_1 W),\\
      \dot{x_2} &= \{x_2, H_C \} = p_2+y_2 W,& \dot{p_2} &= \{p_2, H_C \} =\left(C+W\right) (p_4-x_2 W),\\
      \dot{y_1} &= \{y_1, H_C \} = p_3-x_1 W,& \dot{p_3} &= \{p_3, H_C \} = -\left(C+W\right) (p_1+y_1 W),\\
      \dot{y_2} &= \{y_2, H_C \} = p_4-x_2 W,&\dot{p_4} &= \{p_4, H_C \} = -\left(C+W\right) (p_2+y_2 W).      
    \end{align*}
Let us now introduce the hyperspherical coordinates $(r,\theta_1, \theta_2, \theta_3)$:
 \begin{align*}
 x_1=r \cos\theta_1 \cos\theta_2, \quad y_1=r \cos\theta_1 \sin\theta_2,\quad
 x_2=r \sin\theta_1 \cos\theta_3, \quad y_2=r \sin\theta_1 \sin\theta_3.
 \end{align*}
 In the new coordinates, the Riemannian metric has the form:
 \begin{align}\label{eq:exR}
 dr^2 {+} {r^2} d\theta_1^2 {+} \frac{1}{2}r^2\left[\cos^2\theta_1(2{+}r^2{+}r^2\cos2\theta_1)d\theta_2^2{+} \sin^2\theta_1(2{+}r^2{-}r^2\cos2\theta_1)d\theta_3^2{+}r^2\sin^2 2\theta_1 d\theta_2 d\theta_3\right]
 \end{align}
 and the Hamiltonian function is:
 \begin{align*}
 \tilde H_C(r,\theta_1,\theta_2,\theta_3, p_r, p_{\theta_1}, p_{\theta_2}, p_{\theta_3})=\frac{1}{2}\left[p_r^2+\frac{1}{r^2} p_{\theta_1}^2+\frac{1}{r^2}\left(\frac{p_{\theta_2}^2}{\cos^2\theta_1}+\frac{p_{\theta_3}^2}{\sin^2\theta_1}\right)-\frac{1}{1+r^2}(p_{\theta_2}+p_{\theta_3})^2\right],
 \end{align*}
 with the following Poisson structure on the $8$-dimensional symplectic manifold:
    \begin{align*}
      \{r, p_r\} &=1,\quad \{r,  p_{\theta_k}\}=\{\theta_j, p_r\}= 0,\quad \{\theta_j, p_{\theta_k}\}=\delta_{jk},\quad j,k=1,2,3,\\
      \{p_r,p_{\theta_1}\}&=0, \quad\{p_r,p_{\theta_2}\}=  r C\cos^2\theta_1,\quad \{p_r,p_{\theta_3}\}=r C \sin^2\theta_1,\\
       \{p_{\theta_1},p_{\theta_2}\}&=-\{p_{\theta_1},p_{\theta_3}\}=-  r^2 C\sin\theta_1\cos\theta_1,\quad \{p_{\theta_2}, p_{\theta_3}\} = 0.
    \end{align*}

    The Hamilton equations are given by:
    \begin{align}
      \dot{r} &= \{r, \tilde H_C\} = p_r,\notag\\
       \dot{\theta}_1&=\{\theta_1, \tilde H_C\} = \frac{p_{\theta_1}}{r^2},\notag\\
       \dot{\theta}_2&=\{\theta_2, \tilde H_C\} = \frac{p_{\theta_2}}{r^2\cos^2\theta_1}-\frac{p_{\theta_2}+p_{\theta_3}}{1+r^2},\notag\\
       \dot{\theta}_3&=\{\theta_3, \tilde H_C\} = \frac{p_{\theta_3}}{r^2\sin^2\theta_1}-\frac{p_{\theta_2}+p_{\theta_3}}{1+r^2},\notag\\
       \dot{p}_r&=\{p_r, \tilde H_C\}=\frac{1}{r^3}p_{\theta_1}^2+\frac{p_{\theta_2}+p_{\theta_3}}{r(1+r^2)^2}\left((1+r^2)C-r^2(p_{\theta_2}+p_{\theta_3})\right)
       +\frac{1}{r^3}\left(\frac{p_{\theta_2}}{\cos^2\theta_1}+\frac{p_{\theta_3}}{\sin^2\theta_1}\right),\notag\\
       \dot{p}_{\theta_1}&=\{p_{\theta_1}, \tilde H_C\}= \frac{1}{r^2}\left(p_{\theta_3}\left(r^2C+\frac{p_{\theta_3}}{\sin^2\theta_1}\right)\cot\theta_1 -p_{\theta_2}\left(r^2C+\frac{p_{\theta_2}}{\cos^2\theta_1}\right)\tan\theta_1\right),\label{eq:ELt1}\\
       \dot{p}_{\theta_2}&=\{p_{\theta_2}, \tilde H_C\}=-C\cos\theta_1(r p_r\cos\theta_1-p_{\theta_1}\sin\theta_1),\label{eq:ELt2}\\
       \dot{p}_{\theta_3}&=\{p_{\theta_3}, \tilde H_C\}=-C\sin\theta_1(r p_r\sin\theta_1+p_{\theta_1}\cos\theta_1).\label{eq:ELt3}
    \end{align}
   The first two integrals are easily obtained from~\eqref{eq:ELt2} and~\eqref{eq:ELt3} since:
   \begin{align*}
     \frac{1}{2}\{r^2 C \cos^2\theta_1, \tilde H_C\}&=C\cos\theta_1(r p_r\cos\theta_1-p_{\theta_1}\sin\theta_1),\\
     \frac{1}{2}\{r^2 C \sin^2\theta_1, \tilde H_C\}&=C\sin\theta_1(r p_r\sin\theta_1+p_{\theta_1}\cos\theta_1).
   \end{align*}
   For the third one, we use~\eqref{eq:ELt1} and the fact that $\{p_{\theta_2}+p_{\theta_3}, \tilde H_C\}=-Cr p_r$ and get the following:
   \begin{align*}
     \{p_{\theta_1}^2, \tilde H_C\}&=2\{p_{\theta_1}, \tilde H_C\}p_{\theta_1}= \frac{1}{r^2}\left(p_{\theta_3}\left(r^2C+\frac{p_{\theta_3}}{\sin^2\theta_1}\right)\cot\theta_1 -p_{\theta_2}\left(r^2C+\frac{p_{\theta_2}}{\cos^2\theta_1}\right)\tan\theta_1\right)p_{\theta_1}\\
     &=-2p_{\theta_2}\left(-rC p_r+ p_{\theta_1}\left(C+\frac{p_{\theta_2}}{r^2\cos^2\theta_1}\right)\tan\theta_1\right)\\
     &+2p_{\theta_3}\left(rC p_r+p_{\theta_1}\left(C+\frac{p_{\theta_3}}{r^2\sin^2\theta_1}\right)\cot\theta_1\right)-2rC p_r(p_{\theta_3}+p_{\theta_2})\\
     &=-\{{p_{\theta_2}^2}{\cos^{-2}\theta_1},\tilde H_C\}-\{{p_{\theta_3}^2}{\sin^{-2}\theta_1},\tilde H_C\}+\{(p_{\theta_2}+p_{\theta_3})^2, \tilde H_C\}
   \end{align*}
    Therefore, the flow has four first integrals:
    \begin{align*}
      \tilde I_0& =\tilde H_C,\quad \tilde I_1 = p_{\theta_2} + \frac{r^2}{2} C \cos^2\theta_1,\quad \tilde I_2 = p_{\theta_3} + \frac{r^2}{2} C \sin^2\theta_1,\\
      \tilde I_3 &= p_{\theta_1}^2+{p_{\theta_2}^2}{\cos^{-2}\theta_1}+{p_{\theta_3}^2}{\sin^{-2}\theta_1}-(p_{\theta_2}+p_{\theta_3})^2.
    \end{align*}
    These integrals are involutive and functionally independent almost everywhere.
\end{example}

The Theorem~\ref{thm:right} concerns the standard Riemannian metric, namely the left-invariant metric specified in equation~\eqref{eq:leftR}, where $D(\sigma)$ is the identity matrix and $\tau = 1$. Nonetheless, it can be verified directly that the result remains valid for all $\tau > 0$.

\begin{proposition}\label{prop:int}
    The geodesic flow corresponding to the left-invariant Riemannian metric~\eqref{eq:leftR} on $\Hn$, under the conditions that $D(\sigma)$ is the identity matrix and $\tau > 0$, and with respect to the right-invariant distribution $\DD_R$, defines a Hamiltonian system on the cotangent bundle $T^*\Hn$. The associated Hamiltonian function is given by:
    \begin{align*}
        H(q,z,\bar\lambda,\lambda_{2n+1}){=}\frac{1}{2}\left(\bar\lambda^\t (E{-}\frac{\tau}{1{+}\tau\langle q, q\rangle}(Jq)(Jq)^\t) \bar\lambda{+}2\lambda_{2n{+}1}\bar\lambda^\t(u{+}\frac{\tau\langle u, u\rangle}{1{+}\tau\langle q, q\rangle}Jq){+}\lambda_{2n{+}1}^2(\langle u, u\rangle{-}\frac{\tau\langle u, u\rangle^2}{1{+}\tau\langle q, q\rangle})\right).
    \end{align*}
     This system is integrable via the set of first integrals~\eqref{eq:intRL}, which are functionally independent almost everywhere. 
\end{proposition}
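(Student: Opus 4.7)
The plan is to reduce the proposition to the already-established $\tau=1$ case of Theorem~\ref{thm:right} by means of a parameter-dependent symplectic rescaling. The parameter $\tau$ enters the Hamiltonian only through the substitutions $\langle q,q\rangle \mapsto \tau\langle q,q\rangle$ and $\langle u,u\rangle \mapsto \tau\langle u,u\rangle$, so an anisotropic dilation of the base coordinates together with the dual rescaling of the momenta should absorb $\tau$ into the metric normalization.

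Concretely, I would introduce the map $\Phi_\tau \colon T^*\Hn \to T^*\Hn$ defined by
\begin{align*}
\tilde x_k = \sqrt{\tau}\, x_k, \quad \tilde y_k = \sqrt{\tau}\, y_k, \quad \tilde z = \tau z,\quad
\tilde\lambda_k = \lambda_k/\sqrt{\tau}, \quad \tilde\lambda_{k+n} = \lambda_{k+n}/\sqrt{\tau}, \quad \tilde\lambda_{2n+1} = \lambda_{2n+1}/\tau,
\end{align*}
which is immediately seen to be canonical since $\sum_i d\tilde\lambda_i \wedge d\tilde x_i + d\tilde\lambda_{2n+1}\wedge d\tilde z$ collapses to the original symplectic form. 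Substituting into the Hamiltonian $H$ above and using the three identities $\tau\langle q,q\rangle = \langle \tilde q,\tilde q\rangle$, $\tau\langle u,u\rangle = \langle\tilde u,\tilde u\rangle$ (where $\tilde u = (\tilde y,0)$), and $\sqrt\tau\,Jq = J\tilde q$, I would verify term-by-term that $H = \tau\,\tilde H_1$, where $\tilde H_1$ is the Hamiltonian of Theorem~\ref{thm:right} written in the tilded variables. Since multiplication of a Hamiltonian by a positive constant only rescales time, it preserves trajectories and first integrals, and integrability of $\tilde H_1$ transfers immediately.

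Pulling back the integrals~\eqref{eq:intRL} from the $\tau=1$ case through $\Phi_\tau$, a short direct calculation shows that each $I_j$ (for $1\le j\le n$) and each $I_{n+k}$ (for $1\le k\le n-1$) is pointwise invariant under $\Phi_\tau$, because every monomial in its definition either couples one base coordinate with one fiber coordinate or couples two base coordinates with $\lambda_{2n+1}$, and in both cases the factors of $\sqrt\tau$ cancel. The functions $I_0 = H$ and $I_{2n}=\lambda_{2n+1}$ scale by the constant $\tau$, and the derived integrals $I_{2n+l}=\{I_{n+1},I_{n+l+1}\}$ are again invariant because $\Phi_\tau$ is canonical. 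Since $\Phi_\tau$ is a diffeomorphism, functional independence is preserved, and the counts $\ddim\mathcal F$ and $\dind\mathcal F$ reproduce exactly those obtained in the proof of Theorem~\ref{thm:right}.

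The main obstacle is the bookkeeping in verifying $H = \tau\,\tilde H_1$: one has to check that the prefactor $\tau/(1+\tau\langle q,q\rangle)$, the mixed term $\lambda_{2n+1}\bar\lambda^\t(u+\tfrac{\tau\langle u,u\rangle}{1+\tau\langle q,q\rangle}Jq)$, and the purely $\lambda_{2n+1}^2$ piece each pick up exactly one overall power of $\tau$, so that the scaling factor is uniform across all three summands. Once this is carried out, the proposition follows from Theorem~\ref{thm:right} with no new integrability argument required.
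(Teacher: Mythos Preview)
Your rescaling argument is correct and the computations check out: the map $\Phi_\tau$ is indeed the cotangent lift of the Heisenberg group automorphism $(x,y,z)\mapsto(\sqrt\tau\,x,\sqrt\tau\,y,\tau z)$, hence canonical, and the term-by-term verification that $H=\tau\,\Phi_\tau^*\tilde H_1$ goes through exactly as you describe. The claim that each $I_j$ and $I_{n+k}$ is $\Phi_\tau$-invariant is right (every monomial pairs one base factor with one fiber factor, or two base factors with $\lambda_{2n+1}$), so the same list~\eqref{eq:intRL} serves as first integrals for all $\tau>0$. One cosmetic correction: under your pullback $I_{2n}=\lambda_{2n+1}$ acquires a factor $1/\tau$, not $\tau$; this of course changes nothing.

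This is a genuinely different route from the paper. The paper offers no argument for Proposition~\ref{prop:int} beyond the remark preceding it that ``it can be verified directly that the result remains valid for all $\tau>0$'', i.e.\ one is meant to recompute the Poisson brackets $\{I_j,H\}$ with the $\tau$-dependent Hamiltonian and observe they still vanish. Your approach is more conceptual: it identifies $\Phi_\tau$ as a symplectomorphism conjugating the $\tau$-flow to a time-rescaled copy of the $\tau=1$ flow, which simultaneously explains \emph{why} the identical integrals~\eqref{eq:intRL} work and why the integrability type (commutative for $n\le 2$, non-commutative for $n>2$) is unchanged. The trade-off is that your argument takes the displayed formula for $H$ as given and does not independently derive it from the cometric of the $\tau$-metric restricted to $\mathcal D_R$; but the paper does not derive it either, so this is not a gap relative to the source.
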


\begin{thank}
The authors would like to thank Prof. Bo\v{z}idar Jovanovi\'{c} for the introduction to this problem and for many useful discussions on this topic. 
\end{thank}

\textbf{Data availability} No data are available for this paper.
\newline
\par\textbf{Declarations}
\par\textbf{Conflict of interest} None of the authors have a Conflict of interest to disclose.
\newline
\par\textbf{Ethical Approval}
Not applicable.
\newline 
\par\textbf{Funding}
This research is supported by the Serbian Ministry of Education, Science and Technological Development through the University of Belgrade, Faculty of Mathematics, grant number: 368 451-03-47/2023-01/ 200104.

\end{document}